\newtheorem{thm}{Theorem}[section]
\newtheorem{lem}[thm]{Lemma}
\newtheorem{prop}[thm]{Proposition}
\newtheorem{definition}[thm]{Definition}
\newcommand{\BN}{\mathbb N}
\newcommand{\MX}{\mathcal X}
\newcommand{\MP}{\mathcal P}
\newcommand{\MC}{\mathcal C}
\DeclareMathOperator{\Mod}{Mod}
\DeclareMathOperator{\Homeo}{Homeo}
\DeclareMathOperator{\Aut}{Aut}
\newcommand{\mathsym}[1]{{}}
\newcommand{\unicode}[1]{{}}
\begin{document}

\title{Exhausting pants graphs of punctured spheres by finite rigid sets} 

\author{Rasimate Maungchang}

\maketitle

\begin{abstract}
Let $S_{0,n}$ be an $n$-punctured sphere. For $n \geq 4$, we construct a sequence $(\MX_i)_{i\in \BN}$ of finite rigid sets in the pants graph $\MP(S_{0,n})$ such that $\mathcal{X}_1\subset \mathcal{X}_2\subset ...\subset\mathcal{P}(S_{0,n})$ and $\bigcup_{i\geq1}\mathcal{X}_i=\mathcal{P}(S_{0,n})$.
\end{abstract}
\section{Introduction}
\label{sec:Introduction}

Let $S=S_{g,n}$ be an orientable surface of genus $g$ with $n$ punctures and let $\Mod^{\pm}(S)=\pi_0(\Homeo(S))$ be the extended mapping class group. 
Ivanov~\cite{Ivanov}, Korkmaz~\cite{Korkmaz}, and Luo~\cite{Luo} proved that, for most surfaces, the curve complexes $\MC(S)$ is rigid, that is, $\Aut(\MC(S))\cong \Mod^{\pm}(S)$. 
In~\cite{AL}, Aramayona and Leininger proved that curve complexes contain finite rigid sets; meaning a finite subgraph such that every simplicial embedding is a restriction of an element of $\Mod^{\pm}(S)$. Later in~\cite{AL2}, they showed that there exists an exhaustion of the curve complexes by finite rigid sets.

For the pants graphs $\MP(S)$, the rigidity property was proved by Margalit~\cite{Mar} using the result of Ivanov, Korkmaz, and Lou. 
Aramayona~\cite{Aramayona} extended Margalit's result to prove a stronger form of rigidity, that is, if $S$ and $S'$ are surfaces such that the complexity of $S$ is at least $2$, then every injective simplicial map $\phi:\MP(S)\to \MP(S')$ is induced by a $\pi_1$-injective embedding $f:S\to S'$. In~\cite{Rasimate}, we refined Aramayona's result by showing that the pants graphs of punctured spheres are finitely rigid. 

In this paper, we modify the tools Aramayona and Leininger built in~\cite{AL2}, together with the finite rigid sets we constructed~\cite{Rasimate}, to prove that we can exhaust the pants graphs of punctured spheres by finite rigid sets: 
\begin{thm}
	\label{T:main theorem}
	Let $S_{0,n}$ be an $n$-punctured sphere. For $n \geq 4$, there exists a sequence of finite rigid sets $\mathcal{X}_1\subset \mathcal{X}_2\subset ...\subset\mathcal{P}(S_{0,n})$ such that $\bigcup_{i\geq1}\mathcal{X}_i=\mathcal{P}(S_{0,n})$.
\end{thm}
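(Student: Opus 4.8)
\emph{Overall strategy.} The plan is to adapt the scheme of Aramayona--Leininger \cite{AL2} with the curve complex replaced by the pants graph and the base finite rigid set replaced by the one, call it $\mathcal{X}\subset\mathcal{P}(S_{0,n})$, constructed in \cite{Rasimate}. I would build the sequence by setting $\mathcal{X}_1=\mathcal{X}$ and, inductively, letting $\mathcal{X}_{i+1}$ be $\mathcal{X}_i$ together with every vertex of $\mathcal{P}(S_{0,n})$ that is ``pinned down'' by $\mathcal{X}_i$ in the precise sense described below. By construction the $\mathcal{X}_i$ are nested, and provided the pinning mechanism adds only finitely many vertices at each stage they are finite; so the two things left to prove are (a) each $\mathcal{X}_i$ is rigid, and (b) $\bigcup_i\mathcal{X}_i=\mathcal{P}(S_{0,n})$.

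\emph{Propagation of rigidity.} Call a vertex $v$ (a pants decomposition) \emph{determined by} a finite set $W$ of vertices if $v$ is the unique vertex of $\mathcal{P}(S_{0,n})$ standing in a prescribed relation to the vertices of $W$, where that relation is built only out of data a simplicial embedding of a subcomplex containing $W\cup\{v\}$ must preserve: adjacency (two pants decompositions differing by a single elementary move) and the ``square'' relation coming from two elementary moves supported on disjoint curve slots, whose associated $4$--cycles and link structure a simplicial embedding cannot destroy. The heart of (a) is an \emph{augmentation lemma}: if $\mathcal{Y}\subset\mathcal{P}(S_{0,n})$ is rigid and $v$ is determined by a subset of $\mathcal{Y}$, then $\mathcal{Y}\cup\{v\}$ is rigid. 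Granting this, one gets rigidity of $\mathcal{X}_{i+1}$ from that of $\mathcal{X}_i$ by adjoining the new vertices one at a time, ordered so that each is determined by $\mathcal{X}_i$ together with those already added: given a simplicial embedding $\phi\colon\mathcal{X}_{i+1}\to\mathcal{P}(S_{0,n})$, rigidity of $\mathcal{X}_1$ yields $g\in\Mod^{\pm}(S_{0,n})$ with $\phi|_{\mathcal{X}_1}=g|_{\mathcal{X}_1}$; after replacing $\phi$ by $g^{-1}\phi$, an induction on the adjoined vertices forces $\phi=\id$, since $\phi$ preserves all the defining adjacency/square relations and each $v$ was uniquely characterized by them.

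\emph{The pinning configurations and the exhaustion.} For a curve slot $c$ of a fixed pants decomposition $P_0\in W$, the pants decompositions obtained from $P_0$ by elementary moves along $c$ form a Farey subgraph, and a vertex of that Farey subgraph is singled out once one fixes two of its neighbours on a common triangle together with a reference vertex; imposing such constraints simultaneously across the finitely many curve slots of $P_0$, and across finitely many fixed pants decompositions in $W$ that ``surround'' $v$, pins $v$ down. These are essentially the local characterizations underlying the rigidity of $\mathcal{X}$ in \cite{Rasimate} and of the analogous curve-complex sets in \cite{AL2}. For (b) it then suffices to show every pants decomposition $P$ lies in some $\mathcal{X}_j$: since the curves occurring in the pants decompositions of $\mathcal{X}_1$ fill $S_{0,n}$, every $P$ is joined to $\mathcal{X}_1$ by a finite chain of elementary moves, and one runs an induction on the length of such a chain (equivalently, on the total geometric intersection number of $P$ with a fixed pants decomposition in $\mathcal{X}_1$), refining intermediate pants decompositions so that at each step the next one is surrounded by pants decompositions already known to lie in some $\mathcal{X}_k$. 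The degenerate low cases ($n=4$, where $\mathcal{P}(S_{0,4})$ is the Farey graph, and $n=5$) should be dealt with separately, as in \cite{AL2} and \cite{Rasimate}.

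\emph{Main obstacle.} The delicate point is the augmentation lemma, i.e.\ producing, for every vertex and in every $\mathcal{X}_i$, a \emph{finite} configuration of already-present vertices that characterizes it using only simplicial-embedding-invariant data. Unlike in the curve complex, the pants graph is saturated with squares (commuting moves) and each elementary-move direction is a Farey graph with infinitely many neighbours, so a naive ``unique common neighbour'' recipe fails; the real work is to interlock the Farey structure inside each curve slot with the product (square) structure between disjoint slots so as to rigidify the local picture, while keeping the configurations from growing so fast that the $\mathcal{X}_i$ become infinite. Ensuring that the curves of $\mathcal{X}_1$ surround an arbitrary pants decomposition after only finitely many moves — so that the exhaustion in (b) terminates at each vertex — is the other step that needs care.
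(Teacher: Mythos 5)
Your proposal is a strategy outline whose crux --- the ``augmentation lemma'' asserting that a vertex determined by adjacency and square relations can be adjoined to a rigid set while preserving rigidity --- is left unproved, and you yourself flag it as the main obstacle. This is a genuine gap, not a technicality, and it is compounded by a mismatch with the notion of rigidity actually needed: rigid here means that \emph{every} injective simplicial map $\phi:\MX\to\MP(S_{0,m})$, for an \emph{arbitrary} punctured sphere $S_{0,m}$, is induced by a $\pi_1$-injective embedding. Your propagation step only treats $\phi:\MX_{i+1}\to\MP(S_{0,n})$ and replaces $\phi$ by $g^{-1}\phi$ with $g\in\Mod^{\pm}(S_{0,n})$. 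When $m>n$, rigidity of $\MX_i$ gives an embedding $f$ and a multicurve $Q$ with $\phi|_{\MX_i}=f^{Q}|_{\MX_i}$, but adjacency of $\phi(v)$ to $\phi(w)=f(w)\cup Q$ does \emph{not} force $\phi(v)\supset Q$: the elementary move in $S_{0,m}$ may replace a curve of $Q$ rather than a curve of $f(w)$. So a configuration that uniquely pins $v$ down inside $\MP(S_{0,n})$ need not pin its image down inside $\MP(S_{0,m})$, and this is exactly where the Farey/square structure you mention becomes genuinely delicate. In addition, the finiteness of each $\MX_{i+1}$ under your closure operation and the termination of your exhaustion induction (``refining intermediate pants decompositions so that each is surrounded by vertices already captured'') are asserted, not proved; note that connectivity of $\MP(S_{0,n})$ already gives a finite chain of elementary moves to any $P$, so the filling remark does no work --- what is missing is the argument that the pinning closure actually absorbs each step of the chain.

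For comparison, the paper avoids vertex-by-vertex determination altogether. It enlarges globally: with $\MX=X_n$ the finite rigid set of \cite{Rasimate}, it sets $\MX_{i+1}=\MX_i\cup\bigcup_{\alpha}T^{\pm\frac{1}{2}}_{\alpha}(\MX_i)$, the union over chain curves $\alpha$. Each translate is rigid (being a homeomorphic image of a rigid set), and rigidity of the union is obtained from a gluing lemma: if two rigid sets intersect in a \emph{weakly rigid} set (one on which the inducing embedding and multicurve are unique), their union is rigid. The substantive inputs are that $X_n\cap T^{\pm\frac{1}{2}}_{\alpha}(X_n)$ is weakly rigid (via the rigid copy $X^{\alpha}_{n-1}\cong X_{n-1}$ it contains, plus one extra vertex off $\MP_{\alpha}$ to recover the multicurve), that half-twists about chain curves generate $\Mod(S_{0,n})$, and that $\Mod(S_{0,n})\cdot X_n=\MP(S_{0,n})$ (a dual-graph argument for vertices and a twist adjustment for edges); these three facts give both rigidity of every $\MX_i$ and the exhaustion at once. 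If you want to salvage your approach, you would need to prove an augmentation lemma whose uniqueness statement holds in $\MP(S_{0,m})$ for all $m$, which is essentially the hard content the paper's weak-rigidity gluing is designed to bypass. Your $n=4$ case (attaching Farey triangles) does match the paper's.
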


{\bf Outline of the paper.} Section~\ref{sec:background} contains the relevant background and definitions. 
In Section~\ref{sec:enlarge} we describe the adjustments to the tools Aramayona and Leininger~\cite{AL2} developed to enlarge their rigid sets in the curve complex so we can use them in our setting. 
We use these tools to prove the main theorem in Section~\ref{sec:proof}.

{\bf Acknowledgments.}
The author would like to thank Christopher J. Leininger for his guidance, useful conversations, and suggestions.
\section{Background and definitions}
\label{sec:background}
This section contains necessary definitions and background restricted to punctured spheres, for general settings see~\cite{Aramayona} and~\cite{Mar}.
Let $S=S_{0,n}$ be an $n$-punctured sphere. 
A simple closed curve on $S$ is \textbf{essential} if it does not bound a disk or a once-punctured disk on $S$. 
Throughout this paper, a \textbf{curve} is  a homotopy class of essential simple closed curves on $S$.
Given two curves $\gamma$ and $\gamma'$, we denote their \textbf{geometric intersection number} by  $i(\gamma,\gamma')$, which is the minimum number of transverse intersection points among the representatives of $\gamma$ and $\gamma'$. 
The two curves are \textbf{disjoint} if $i(\gamma,\gamma')=0$

A \textbf{multicurve} $Q$ is a set of pairwise distinct, disjoint curves on $S$.
For a given multicurve $Q$, the \textbf{nontrivial piece} $(S-Q)_0$ of the complement of the curves in $Q$ is the union of the non thrice-punctured sphere components of the complement. We call a thrice-punctured sphere, \textbf{a pair of pants}.

A \textbf{pants decomposition} $P$ is a maximal multicurve: the complement in $S$ is a disjoint union of pairs of pants. 
A pants decomposition always contains $n-3$ curves and we call this number the \textbf{complexity} $\kappa(S)$ of $S$. 
The \textbf{deficiency} of a multicurve $Q$ is the number $\kappa(S)-|Q|$. 
If $Q$ is a deficiency-$1$ multicurve then $(S-Q)_0$ is homeomorphic to $S_{0,4}$.

Let $P$ and $P'$ be pants decompositions of $S$. We say that $P$ and $P'$ differ by an  \textbf{elementary move} if there are curves $\alpha, \alpha'$ on $S$ and a deficiency-$1$ multicurve $Q$ such that $P=\{\alpha\}\cup Q, P'=\{\alpha'\}\cup Q$ and $i(\alpha,\alpha')=2$; see Figure~\ref{F:elementary} for an example of elementary moves.

\begin{figure}[ht]
	\begin{center}
		\includegraphics[height=2.5cm]{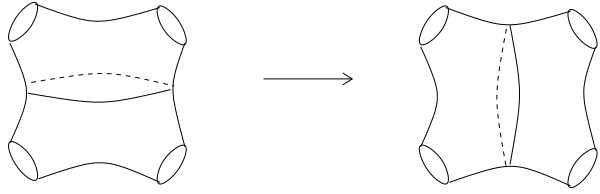} 
		\caption{Example of an elementary move.} 
		\label{F:elementary}
	\end{center}
\end{figure}

The \textbf{pants graph} $\MP(S)$ of $S$ is a graph with the set of vertices corresponding to pants decompositions. 
Two vertices are connected by an edge if the corresponding pants decompositions differ by an elementary move. 
The pants graph $\MP(S)$ is connected and the pants graph $\MP(S_{0,4})$ of a $4$-punctured sphere is isomorphic to a Farey graph, see~\cite{HT}.

A \textbf{path} in $\MP(S)$ is an edge path determined by a sequence of distinct adjacent vertices of $\MP(S)$. 
A \textbf{cycle} in $\MP(S)$ is a subgraph homeomorphic to a circle. 
We call a cycle, a \textbf{triangle}, \textbf{rectangle}, or \textbf{pentagon} if it has $3$, $4$, or $5$ vertices, respectively.

Let $\MX\subset\MP(S_{0,n})$ and $\phi:\MX \to \mathcal{P}(S_{0,m})$ be an injective simplicial map.
We say that a $\pi_1$-injective embedding $f:S_{0,n} \to S_{0,m}$ \textbf{induces} $\phi$ if there is a deficiency-$(n-3)$ multicurve $Q$ on $S_{0,m}$ such that $f(S_{0,n})=(S_{0,m}-Q)_0$ and the simplicial map  
\[
f^{Q}:\mathcal{P}(S_{0,n}) \to \mathcal{P}(S_{0,m}), 
\]
defined by
$
f^{Q}(u)=f(u)\cup Q
$
satisfies $f^{Q}(u)=\phi(u)$ for any vertex $u\in \MX$.
\begin{definition}
	\label{def:rigid}
	For $n\geq4$, we say that $\MX\subset\MP(S_{0,n})$ is \textbf{rigid} if for any punctured sphere $S_{0,m}$ and any injective simplicial map 
	\[
	\phi:\MX \to \mathcal{P}(S_{0,m}),
	\]
	there exists a $\pi_1$-injective embedding $f:S_{0,n} \to S_{0,m}$ that induces $\phi$.
	
	For $n=4$, the isotopy class of $f$ is unique up to precomposing by an element $\sigma\in\Mod(S_{0,4})$ inducing the identity on $\mathcal{P}(S_{0,4})$.
	
	For $n\geq 5$, the isotopy class of $f$ is unique.
\end{definition}
The following theorem is a refinement of Aramayona's result~\cite{Aramayona} that we proved in~\cite{Rasimate}.
\begin{thm}
	\label{T:finite rigid}
	For $n\geq 4$, there exists a finite rigid subgraph $X_n \subset \mathcal{P}(S_{0,n})$.
\end{thm}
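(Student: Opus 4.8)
The plan is to localize the proofs of pants-graph rigidity due to Margalit~\cite{Mar} and Aramayona~\cite{Aramayona}, replacing the global combinatorial objects they exploit by finite pieces, and to feed the curve data reconstructed in this way into the finite rigidity of the curve complex of Aramayona and Leininger~\cite{AL}. First I would fix a round circle through the $n$ punctures and let $\Gamma_n\subset\MC(S_{0,n})$ be the Aramayona--Leininger finite rigid set consisting of the curves enclosing each block of consecutive punctures; then I enlarge $\Gamma_n$ by finitely many curves so that (i) every multicurve contained in $\Gamma_n$ extends to a pants decomposition contained in $\Gamma_n$, and (ii) for every deficiency-$1$ multicurve $Q\subset\Gamma_n$ the two curves of a standard elementary move on $(S_{0,n}-Q)_0$ again lie in $\Gamma_n$. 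Since a superset of a rigid set of $\MC(S_{0,n})$ is again rigid, $\Gamma_n$ stays rigid. I then take $X_n$ to be the full subgraph of $\MP(S_{0,n})$ spanned by the pants decompositions $P\subseteq\Gamma_n$. This is finite, and for each deficiency-$1$ multicurve $Q\subset\Gamma_n$ it contains a finite rigid subgraph $F_Q=\MP_Q\cap X_n$ of the Farey subgraph $\MP_Q$ (the subgraph of pants decompositions containing $Q$, which is isomorphic to $\MP(S_{0,4})$), with the overlaps $F_Q\cap F_{Q'}$ nonempty precisely when $Q$ and $Q'$ are compatible.

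Now let $S_{0,m}$ be a punctured sphere and $\phi:X_n\to\MP(S_{0,m})$ an injective simplicial map. The core of the argument is a detection step: for each deficiency-$1$ multicurve $Q\subset\Gamma_n$ one must show that $\phi(F_Q)$ lies in a single Farey subgraph $\MP_{Q^{\ast}}$ of $\MP(S_{0,m})$, for a well-defined deficiency-$1$ multicurve $Q^{\ast}$ on $S_{0,m}$, and that $\phi|_{F_Q}$ is the restriction of an embedding $(S_{0,n}-Q)_0\hookrightarrow(S_{0,m}-Q^{\ast})_0$. This is where I expect the main difficulty to lie: the characterizations of Farey subgraphs inside a pants graph used in~\cite{Aramayona} (maximality among Farey-isomorphic subgraphs, abundance of cycles) are global, so I would need a local substitute, assembled from the classification of short cycles --- triangles, rectangles, pentagons --- in pants graphs together with the rigidity of finite subgraphs of the Farey graph from~\cite{AL}: a suitable configuration of triangles and pentagons inside $F_Q$ should already force its $\phi$-image into one Farey subgraph of the target, and the rigidity of that finite Farey piece then produces the embedding of the four-punctured piece.

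Granting the detection step, for a curve $\gamma\in\Gamma_n$ the multicurves $Q^{\ast}$ attached to the deficiency-$1$ multicurves $Q\subset\Gamma_n$ with $\gamma\in Q$ share a common curve $\gamma^{\ast}$ on $S_{0,m}$ (propagate the identification across the nonempty overlaps $F_Q\cap F_{Q'}$). The assignment $\gamma\mapsto\gamma^{\ast}$ preserves disjointness, since disjoint curves of $\Gamma_n$ occur together in some $P\in X_n$ and hence have images lying in the common pants decomposition $\phi(P)$; one checks as in~\cite{Aramayona}, using the injectivity of $\phi$, that it is also injective, so it is a simplicial embedding of the full subcomplex $\langle\Gamma_n\rangle$ into $\MC(S_{0,m})$. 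By the finite rigidity of the curve complex, in the form permitting simplicial embeddings into curve complexes of other punctured spheres (Aramayona--Leininger~\cite{AL} together with the curve-complex embedding rigidity invoked in~\cite{Aramayona}), this embedding is the restriction of a $\pi_1$-injective embedding $f:S_{0,n}\to S_{0,m}$ with $f(\gamma)=\gamma^{\ast}$; write $f(S_{0,n})=(S_{0,m}-Q_0)_0$ with $Q_0$ of deficiency $n-3$. It then remains to check that $f$ induces $\phi$: for $P\in X_n$ and $\gamma\in P$, taking $Q=P\setminus\{\beta\}$ with $\beta\in P\setminus\{\gamma\}$ gives $P\in F_Q$, hence $f(\gamma)=\gamma^{\ast}\in Q^{\ast}\subseteq\phi(P)$, so $\phi(P)\supseteq f(P)$; since $f(P)\cup Q_0$ is a pants decomposition of $S_{0,m}$ and $\phi(P)$ has exactly $m-3$ curves, $\phi(P)=f(P)\cup Q_0=f^{Q_0}(P)$. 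The base case $n=4$ is checked by hand --- $\MP(S_{0,4})$ is the Farey graph and one exhibits an explicit finite rigid subgraph $X_4$, with uniqueness of $f$ only up to the element of $\Mod(S_{0,4})$ acting trivially on $\MP(S_{0,4})$ --- and for $n\geq5$ the induction on $n$ supplies the rigidity of the smaller pants graphs used in the detection step, while uniqueness of $f$ is inherited from uniqueness for the curve-complex embedding.
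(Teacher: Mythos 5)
Your plan stops exactly where the theorem lives. The ``detection step'' --- showing that for each deficiency-$1$ multicurve $Q\subset\Gamma_n$ the image $\phi(F_Q)$ lands in a single Farey subgraph $\MP_{Q^{\ast}}$ and that $\phi|_{F_Q}$ comes from an embedding of the $S_{0,4}$-piece --- is the actual content of finite rigidity of $\MP(S_{0,n})$ (this is what is proved in~\cite{Rasimate}), and you explicitly leave it as ``where I expect the main difficulty to lie.'' Moreover, the candidate set you propose is too small for any such argument to work. If $\Gamma_n$ is the Aramayona--Leininger curve system, then for each deficiency-$1$ multicurve $Q\subset\Gamma_n$ the four-punctured piece $(S_{0,n}-Q)_0$ contains exactly two curves of $\Gamma_n$, so $F_Q$ is a single edge; for $n=5$ the graph of pants decompositions supported on $\Gamma_5$ (the set called $Z_5$ in the paper) is just an alternating pentagon. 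Edges and pentagons carry no rigidity: an injective simplicial map can send a $5$-cycle to any $5$-cycle inside a single Farey subgraph of the target, and even when the image is known to be an alternating pentagon, Luo's recognition lemma determines the inducing homeomorphism only up to the involution $e$ of the doubled polygon, which fixes $Z_5$ pointwise. This is precisely why the rigid set of~\cite{Rasimate} is not $Z_5$ but $Z_5\cup\bigcup_{c\in\Gamma_5}T^{\pm\frac{1}{2}}_c(Z_5)$ --- the ten half-twist triangles hung on the pentagon both force images into the right Farey subgraphs and kill the involution ambiguity --- and why $X_n$ for $n\geq6$ is $Z_n$ together with all the sets $X_5^W$. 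Your enlargement conditions (i)--(ii) supply nothing comparable (a triangle in $F_Q$ would require three pairwise-intersecting curves of the system in each $S_{0,4}$-piece, and adding such curves creates new deficiency-$1$ multicurves for which (ii) must again be arranged, with no argument that this closes up or that rigidity survives).

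Two of the background facts you lean on are also not available in the form you use them. A superset of a rigid set of $\MC(S_{0,n})$ is \emph{not} automatically rigid --- only weak rigidity passes to supersets; if rigidity did, the exhaustion problem solved in~\cite{AL2} (and in this paper for pants graphs) would be trivial --- so your enlarged $\Gamma_n$ is not known to be a rigid set of curves. And the finite rigidity theorem of~\cite{AL} concerns simplicial injections of the finite set into $\MC(S_{0,n})$ itself, induced by elements of $\Mod^{\pm}(S_{0,n})$; what your final step needs is an ``embedding rigidity'' statement for injections into $\MC(S_{0,m})$ with $m>n$, which is not in~\cite{AL} and is not supplied by~\cite{Aramayona} either (Aramayona's pants-graph argument does not route through such a finite curve-complex embedding theorem). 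So both the central local step and the global assembly rest on unproven ingredients; as written this is a strategy outline, not a proof, and its candidate $X_n$ differs in an essential way from the one that is actually shown to be rigid.
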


\section{Tools for enlarging rigid sets}
\label{sec:enlarge}
This section contains the definitions and theorems Aramayona and Leininger~\cite{AL2} developed to enlarge their rigid sets in the curve complexes. We make some necessary adjustments to them in order to enlarge rigid sets in the pants graphs.
\begin{definition}
	\label{def:weakly}
	Let $n\geq 5$. A set $\MX \subset \MP(S_{0,n})$ is said to be \textbf{weakly rigid} if whenever $f_1, f_2 :S_{0,n}\to S_{0,m}$ are $\pi_1$-injective embeddings satisfy
	\[
	f_1^{Q_1}|_{\MX}=f_2^{Q_2}|_{\MX},
	\]
	for some deficiency-$(n-3)$ multicurves $Q_1$ and $Q_2$ on $S_{0,m}$, then 
	\[Q_1=Q_2 \text{~and~} f_1=f_2,\]
	up to isotopy.
\end{definition}

It is easy to see from the definition that a superset of a weakly rigid set is also weakly rigid.

\begin{lem}
	\label{lem:union of weakly rigid sets}
	For $n\geq 5$, let $\MX_1,\MX_2\subset \MP(S_{0,n})$ be rigid sets. If $\MX_1\cap\MX_2$ is weakly rigid then $\MX_1\cup\MX_2$ is rigid.
\end{lem}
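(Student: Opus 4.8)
The plan is to take an arbitrary injective simplicial map $\phi:\MX_1\cup\MX_2\to\MP(S_{0,m})$ and build a single $\pi_1$-injective embedding $f$ that induces it on all of $\MX_1\cup\MX_2$. First I would restrict $\phi$ to $\MX_1$ and to $\MX_2$ separately. Since each $\MX_j$ is rigid, for $j=1,2$ there is a $\pi_1$-injective embedding $f_j:S_{0,n}\to S_{0,m}$ and a deficiency-$(n-3)$ multicurve $Q_j$ on $S_{0,m}$ with $f_j(S_{0,n})=(S_{0,m}-Q_j)_0$ and $f_j^{Q_j}|_{\MX_j}=\phi|_{\MX_j}$. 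The goal is to show $(f_1,Q_1)=(f_2,Q_2)$ up to isotopy, so that the common map $f:=f_1=f_2$ induces $\phi$ on the union.

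The key observation is that $f_1$ and $f_2$ agree on the overlap. Indeed, for any vertex $u\in\MX_1\cap\MX_2$ we have
\[
f_1^{Q_1}(u)=\phi(u)=f_2^{Q_2}(u),
\]
so $f_1^{Q_1}|_{\MX_1\cap\MX_2}=f_2^{Q_2}|_{\MX_1\cap\MX_2}$. Now I invoke the hypothesis that $\MX_1\cap\MX_2$ is weakly rigid: by Definition~\ref{def:weakly} this forces $Q_1=Q_2$ and $f_1=f_2$ up to isotopy. Set $Q:=Q_1=Q_2$ and $f:=f_1=f_2$. Then $f^{Q}|_{\MX_1}=f_1^{Q_1}|_{\MX_1}=\phi|_{\MX_1}$ and $f^{Q}|_{\MX_2}=f_2^{Q_2}|_{\MX_2}=\phi|_{\MX_2}$, hence $f^{Q}|_{\MX_1\cup\MX_2}=\phi$, which is exactly the statement that $f$ induces $\phi$. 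This shows $\MX_1\cup\MX_2$ is rigid.

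The one point that requires care is the applicability of the weak rigidity hypothesis: Definition~\ref{def:weakly} is stated for $n\ge 5$, and it presumes we already have two embeddings defined on the relevant set. Here the embeddings $f_1,f_2$ are produced by rigidity of $\MX_1,\MX_2$ and only a priori defined with reference to those larger sets, but their restrictions to $\MX_1\cap\MX_2$ are honest simplicial maps of the required form $f_j^{Q_j}$, so the hypothesis applies verbatim. I expect no serious obstacle beyond bookkeeping; the substantive content has been pushed into Theorem~\ref{T:finite rigid} (rigidity of each $\MX_j$) and into the notion of weak rigidity, and the lemma is essentially a gluing statement. The mild subtlety is simply confirming that "induces" for the union means agreement of a single $f^Q$ on every vertex of $\MX_1\cup\MX_2$, which the displayed computation gives directly.
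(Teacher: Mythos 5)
Your proposal is correct and follows essentially the same argument as the paper: apply rigidity of each $\MX_j$ to get $(f_j,Q_j)$, observe the two induced maps agree on $\MX_1\cap\MX_2$, invoke weak rigidity to conclude $Q_1=Q_2$ and $f_1=f_2$, and then check that the common map induces $\phi$ on the union. The extra remarks about the applicability of Definition~\ref{def:weakly} are fine but not needed beyond what the paper's proof already records.
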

\begin{proof}
	Let $\phi:\MX_1\cup\MX_2\to\MP(S_{0,m})$ be an injective simplicial map. 
	Since $\MX_i$ is rigid, there exist a $\pi_1$-injective embedding $f_i:S_{0,n}\to S_{0,m}$ and a deficiency-$(n-3)$ multicurve $Q_i$ such that $f_i^{Q_i}|_{\MX_i}=\phi|_{\MX_i}$.  
	Hence $f^{Q_1}_1|_{\MX_1\cap\MX_2}=\phi|_{\MX_1\cap\MX_2}=f^{Q_2}_2|_{\MX_1\cap\MX_2}$. 
	The weakly rigidity of $\MX_1\cap\MX_2$ implies that  $Q_1=Q_2=Q$ and $f_1=f_2=f$. 
	Therefore $f$ is a $\pi_1$-injective embedding such that $f^Q|_{\MX_1\cup\MX_2}=\phi$ which implies the rigidity of $\MX_1\cup\MX_2$.
\end{proof}

The following proposition is the key to enlarge rigid sets.

\begin{prop}
	\label{prop:enlarge}
	For $n\geq 5$, let $\MX\subset\MP(S_{0,n})$ be a finite rigid set such that $\Mod(S_{0,n})\cdot\MX=\MP(S_{0,n})$. Suppose there exists a finite subset $C$ of curves on $S_{0,n}$ such that:
	
	$(1)$ The set $\{T^{\pm\frac{1}{2}}_{\alpha}~|~\alpha\in C\}$ generates $\Mod(S_{0,n})$;
	
	$(2)$ $\MX\cap T^i_{\alpha}(\MX)$ is weakly rigid, for all $\alpha\in C$, and $i\in\{-\frac{1}{2},\frac{1}{2}\}$.
	
	Then there exists a sequence $\MX=\MX_1\subset\MX_2\subset...\subset\MX_n\subset...$ such that each $\MX_i$ is a finite rigid set, and
	
	\[
	\bigcup_{i\in\BN}\MX_i=\MP(S_{0,n}).
	\]
\end{prop}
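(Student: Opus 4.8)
The plan is to build the sequence recursively by spreading $\MX$ around with the generating half-twists, and to verify at each step that rigidity is preserved using Lemma~\ref{lem:union of weakly rigid sets}. Set $\MX_1=\MX$ and, having defined the finite set $\MX_k$, set
\[
\MX_{k+1}=\MX_k\cup\bigcup_{\alpha\in C}\bigl(T^{\frac12}_\alpha(\MX_k)\cup T^{-\frac12}_\alpha(\MX_k)\bigr).
\]
Since $C$ is finite and $\MX$ is finite, each $\MX_k$ is a finite subgraph of $\MP(S_{0,n})$, and $\MX_k\subseteq\MX_{k+1}$ by construction.

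First I would record that rigidity is invariant under the $\Mod(S_{0,n})$-action: if $Y\subseteq\MP(S_{0,n})$ is rigid and $g\in\Mod(S_{0,n})$, then $g(Y)$ is rigid. Indeed $g$ acts as an automorphism of $\MP(S_{0,n})$, so given an injective simplicial map $\psi:g(Y)\to\MP(S_{0,m})$ the composite $\psi\circ g|_{Y}:Y\to\MP(S_{0,m})$ is again injective and simplicial, hence induced by some $\pi_1$-injective embedding $f$ together with a deficiency-$(n-3)$ multicurve $Q$; a direct check then shows that $f\circ g^{-1}$ together with the \emph{same} $Q$ induces $\psi$. In particular $T^{\pm\frac12}_\alpha(\MX_k)$ is rigid whenever $\MX_k$ is.

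Next, each $\MX_k$ is rigid, by induction on $k$, the base case $\MX_1=\MX$ being the hypothesis. For the inductive step, enumerate the pairs $(\alpha,i)$ with $\alpha\in C$ and $i\in\{-\tfrac12,\tfrac12\}$ as $(\alpha_1,i_1),\dots,(\alpha_N,i_N)$, put $Y_0=\MX_k$ and $Y_j=Y_{j-1}\cup T^{i_j}_{\alpha_j}(\MX_k)$, so $Y_N=\MX_{k+1}$. Assume $Y_{j-1}$ is rigid. Since $\MX=\MX_1\subseteq\MX_k$, applying the homeomorphism $T^{i_j}_{\alpha_j}$ gives
\[
\MX\cap T^{i_j}_{\alpha_j}(\MX)\ \subseteq\ \MX_k\cap T^{i_j}_{\alpha_j}(\MX_k)\ \subseteq\ Y_{j-1}\cap T^{i_j}_{\alpha_j}(\MX_k),
\]
and the leftmost set is weakly rigid by hypothesis~$(2)$; as a superset of a weakly rigid set is weakly rigid, $Y_{j-1}\cap T^{i_j}_{\alpha_j}(\MX_k)$ is weakly rigid. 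Since $Y_{j-1}$ is rigid (inductive hypothesis) and $T^{i_j}_{\alpha_j}(\MX_k)$ is rigid (previous paragraph), Lemma~\ref{lem:union of weakly rigid sets} shows $Y_j$ is rigid, so after $N$ steps $\MX_{k+1}$ is rigid.

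It then remains to check $\bigcup_{k}\MX_k=\MP(S_{0,n})$; the inclusion $\subseteq$ is immediate, and for the reverse inclusion let $v$ be a vertex of $\MP(S_{0,n})$. By $\Mod(S_{0,n})\cdot\MX=\MP(S_{0,n})$ we may write $v=g(u)$ with $u\in\MX$ and $g\in\Mod(S_{0,n})$, and by hypothesis~$(1)$ we may write $g=s_\ell\cdots s_1$ with each $s_j\in\{T^{\pm\frac12}_\alpha:\alpha\in C\}$. Since $\MX_{k+1}\supseteq s(\MX_k)$ for every such generator $s$, an easy induction on $\ell$ gives $g(\MX)\subseteq\MX_{\ell+1}$, whence $v\in\MX_{\ell+1}\subseteq\bigcup_k\MX_k$. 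Once Lemma~\ref{lem:union of weakly rigid sets} is in hand the argument is essentially bookkeeping; the only point requiring care is keeping the comparison sets weakly rigid, and this works precisely because $\MX$ stays inside every $\MX_k$, so the weakly rigid sets $\MX\cap T^{\pm\frac12}_\alpha(\MX)$ supplied by hypothesis~$(2)$ lie inside every intersection we must control, while hypothesis~$(1)$ is exactly what forces the union to exhaust $\MP(S_{0,n})$.
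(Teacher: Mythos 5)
Your proposal is correct and follows essentially the same route as the paper: the same recursive definition of $\MX_{k+1}$ via the half-twists in $C$, rigidity of each stage via Lemma~\ref{lem:union of weakly rigid sets} together with the observation that $\MX\cap T^{\pm\frac12}_\alpha(\MX)\subseteq\MX_k\cap T^{\pm\frac12}_\alpha(\MX_k)$ keeps the relevant intersections weakly rigid, and exhaustion from hypotheses $(1)$ and the transitivity $\Mod(S_{0,n})\cdot\MX=\MP(S_{0,n})$. Your write-up is in fact a bit more careful than the paper's (explicit inner induction over the generators and an explicit check that rigidity is preserved under the mapping class group action), but the argument is the same.
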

\begin{proof}
	Since $\MX$ is rigid and a half twist is a homeomorphism, $T^{i}_\alpha(\MX)$ is rigid for all $\alpha\in C$, and $i\in\{-\frac{1}{2},\frac{1}{2}\}$.
	Given $\alpha,\beta\in C$ and $i, j\in\{-\frac{1}{2},\frac{1}{2}\}$. 
	By assumption $(2)$ and by applying  Lemma~\ref{lem:union of weakly rigid sets}, we see that $\MX\cup T^i_\alpha(\MX)$ is rigid. 
	Recall that a superset of a weakly rigid set is also weakly rigid. 
	Hence $(\MX\cup T^i_\alpha(\MX))\cap T^j_\beta(\MX)$, which contains $\MX\cap T^j_\beta(\MX)$, is weakly rigid.
	Apply Lemma~\ref{lem:union of weakly rigid sets}, we see that $\MX\cup T^i_\alpha(\MX)\cup T^j_\beta(\MX)$ is weakly rigid. By repeating above arguments, the set $\MX_2:=\MX\cup\bigcup_{\alpha\in C} T^{\pm\frac{1}{2}}_{\alpha}(\MX)$ is rigid. 
	We define
	\[\MX_{n+1}:=\MX_n\cup\bigcup_{\alpha\in C}T^{\pm\frac{1}{2}}_{\alpha}(\MX_n),\]
	for $n\geq2$.
	Since a weakly rigid set $\MX\cap T^i_\alpha(\MX)$ is a subset of $\MX_n\cap T^i_\alpha(\MX_n)$ , $\MX_n\cap T^i_\alpha(\MX_n)$ is weakly rigid.
	Again, by applying~\ref{lem:union of weakly rigid sets} repeatedly and use induction, we conclude that $\MX_n$ is rigid for all $n$. Then the first claim is proved. 
	
	Since  $\{T^{\pm\frac{1}{2}}_{\alpha}~|~\alpha\in C\}$ generates $\Mod(S_{0,n})$ and $\Mod(S_{0,n})\cdot\MX=\MP(S_{0,n})$,
	\[
	\bigcup_{i\in\BN}\MX_i=\MP(S_{0,n}).
	\]	
\end{proof}

\section{The proof of the main theorem}
\label{sec:proof}
We note that for $n\leq 3$, the pants graphs $\MP(S_{0,3})$ is empty. 
We give a separate proof for $n=4$ as follow.

\begin{proof}[Proof of Theorem~\ref{T:main theorem} for $S=S_{0,4}$.]
	The pants graph of $S_{0,4}$ is isomorphic to the Farey graph. 
	Any triangle in $S_{0,4}$ is rigid as proved in~\cite{Rasimate}. 
	Then we let $\MX_1$ to be a triangle. 
	Each edge in a pants graph of any punctured sphere is contained in exactly two triangles which are both in the same Farey graph.
	Then we can define $\MX_{n+1}$ inductively; let $\MX_{n+1}$ be an enlargement of $\MX_n$ obtained by attaching one more triangle to each edge of $\MX_n$ contained in only one triangle.
	Hence $\MX_{n+1}$ is rigid for all $n\geq 1$, and by the construction, $\bigcup_{i\in\BN}(\MX_i)=\MP(S_{0,4})$. 
	We conclude that sequence $(\MX_n)_{n\in\BN}$ is an exhaustion of $\MP(S_{0,4})$.
\end{proof}  

For $n\geq 5$, we begin by recalling the construction of finite rigid sets $X_n$ in~\cite{Rasimate}. 
First we construct $S_{0,n}$ with a set of curves, then define $X_5$, and finally, define $X_n$ for $n\geq 6$.

Consider a regular $n$-gon with the $n$ vertices removed and label the sides as $1, 2,..., n$, cyclically.
For each non-adjacent pair of sides of the $n$-gon, draw a straight line segment to connect the two sides.  
Then double the $n$-gon to obtain $S_{0,n}$ and a set of curves $\Gamma_n$, see Figure~\ref{F:S_0_8} for the case of $S_{0,8}$ and Figure~\ref{F:S05andX05} for the case of $S_{0,5}$.
Let $a_{i,j}\in\Gamma_n$ be the curve connecting the $i^{th}$ side to the $j^{th}$ side of $S_n$. We call $a_{i,j}$ such that $i-j\equiv\pm 2\mod n$, \textbf{a chain curve}. Compare to \cite[Section 3]{AL}.

Let $Z_n$ be a subgraph of $\mathcal{P}(S_{0,n})$ induced by the set of vertices corresponding to  pants decompositions consisting of curves from $\Gamma_n$.

For $\MP(S_{0,5})$, we defined
\[X_5 = Z_5 \cup \bigcup_{c \in \Gamma_5} T^{\pm \frac{1}{2}}_c(Z_5),\]
where $T^{\frac{1}{2}}_c$ is a simplicial map on  $\MP(S_{0,5})$ induced by the half-twist around the curve $c$. 

See Figure~\ref{F:S05andX05} for a partial figure of $X_5$.
The subgraph $X_5$ consists of the alternating pentagon $Z_5$ and $10$ of its images under the twists. 
Those $10$ images form $10$ triangles attached to $Z_5$. 
In~\cite{Rasimate}, we proved that $X_5$ is rigid.

For $n\geq 6$, we construct $X_n$ as follows.
Let $W\subset \Gamma_n$ be a deficiency-$2$ multicurve such that $(S_{0,n}-W)_0\cong S_{0,5}$. 
Let $\Gamma^W_5=\{\alpha\in\Gamma_n~|~\alpha$ is disjoint from all curves in $W\}$.
There is a natural homeomorphism $h:S_{0,5} \to (S_{0,n}-W)_0$ such that $h(\Gamma_5)=\Gamma_5^W$, see~\cite[Lemma 3.1]{Rasimate}.
Let 
\[ X_5^W = h^W(X_5) = \{ h(u) \cup W \mid u \in X_5 \},\]
where $h^W:P(S_{0,5})\to P(S_{0,n})$ is the induced map of $h$ defined by $h^W(u)=h(u)\cup W$. 
Then $X_5^W\cong X_5$.
Finally we let 
\[
X_n = Z_n \cup \bigcup_W X^W_5,
\]
where the union is taken over all deficiency-$2$ multicurves in $\Gamma_n$ with a $5$-punctured sphere component.
In~\cite{Rasimate}, we proved that $X_n$ is rigid.
\begin{figure}[ht]
	\begin{center}
		\includegraphics[height=4cm]{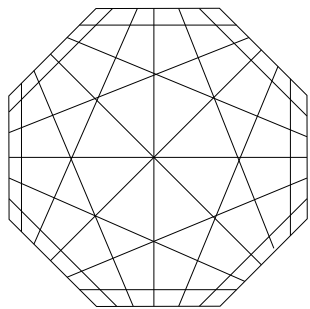} 
		\caption{$S_{0,8}$ and the set of simple closed curves $\Gamma_8$.} 
		\label{F:S_0_8}
	\end{center}
\end{figure}
\begin{figure}[ht]
	\begin{center}
		\includegraphics[height=9cm]{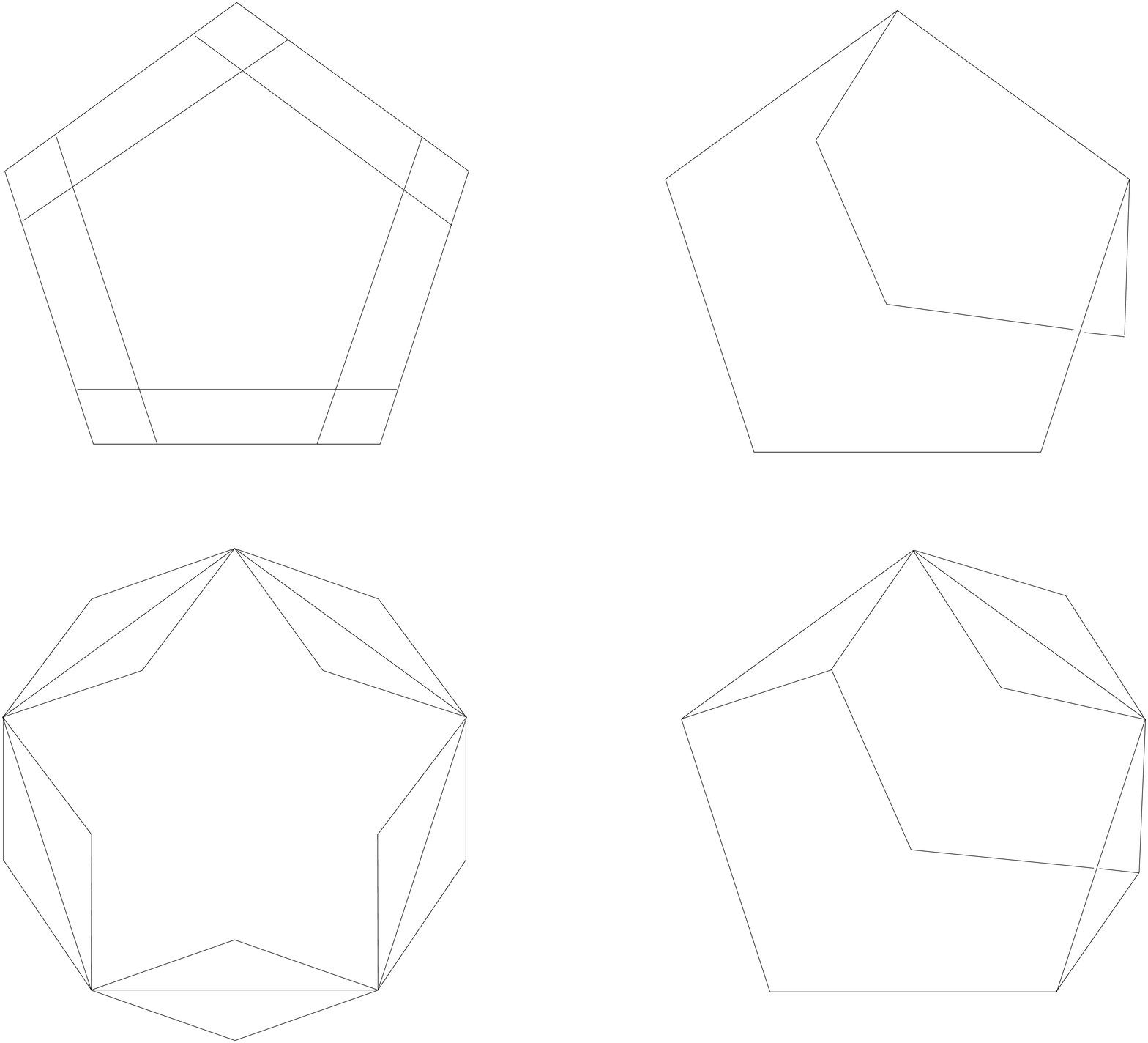} 
		\caption{(Top left) $S_{0,5}$ and curves in $\Gamma_5$, (top right) $Z_5 \cup T^{\frac{1}{2}}_\beta(Z_5)$, (bottom left) $Z_5$ together with the $10$ triangles, and (bottom right) $X_5\cap T^{\frac{1}{2}}_\alpha(X_5)$.} 
		\label{F:S05andX05}
	\end{center}
	
	\begin{picture}(0,0)(0,0)
	\put(95,325){\small $\alpha$}
	\put(60,300){\small $\beta$}
	\put(50,273){\small $\epsilon$}
	\put(62,230){\small $\gamma$}
	\put(127,207){\small $\delta$}
	
	\put(270,330){\scriptsize $A=\left\{\alpha,\beta\right\}$}
	\put(182,290){\scriptsize $E=\left\{\alpha,\gamma\right\}$}
	\put(330,290){\scriptsize $B=\left\{\delta,\beta\right\}$}
	\put(310,208){\scriptsize $C=\left\{\delta,\epsilon\right\}$}
	\put(235,208){\scriptsize $D=\left\{\gamma,\epsilon\right\}$}
	
	\put(330,235){\scriptsize $T^{1/2}_{\beta}(C)$}
	\put(250,245){\scriptsize $T^{1/2}_{\beta}(D)$}
	\put(260,290){\scriptsize $T^{1/2}_{\beta}(E)$}
	\end{picture}
\end{figure}

	We need the following lemmas to prove the main theorem for $n\geq 5$.
	\begin{lem}
		\label{lem:Mod}
		$\Mod(S_{0,n})\cdot X_n=\MP(S_{0,n})$
	\end{lem}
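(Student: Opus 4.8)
The plan is to show that every pants decomposition of $S_{0,n}$ can be carried to one appearing among the vertices of $X_n$ by some mapping class, which is equivalent to the asserted equality since $X_n \subset \MP(S_{0,n})$. I would start from the observation that $Z_n \subset X_n$, so it suffices to prove $\Mod(S_{0,n}) \cdot Z_n = \MP(S_{0,n})$; that is, every pants decomposition $P$ of $S_{0,n}$ is equivalent under the mapping class group to a pants decomposition all of whose curves lie in $\Gamma_n$.

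The key step is then a \emph{change of coordinates} argument in the spirit of the standard change-of-coordinates principle for surfaces (see \cite{Mar}, \cite{Aramayona}). First I would fix a specific ``reference'' pants decomposition $P_0$ of $S_{0,n}$ whose curves all belong to $\Gamma_n$ — for instance the chain of curves $a_{1,3}, a_{1,4}, \dots, a_{1,n-1}$ (or any linear chain of $n-3$ chain curves cutting $S_{0,n}$ into pairs of pants), and verify directly from the construction of $\Gamma_n$ via the doubled $n$-gon that such a decomposition indeed consists of curves in $\Gamma_n$ and is a genuine pants decomposition, hence a vertex of $Z_n$. Next, given an arbitrary pants decomposition $P$ of $S_{0,n}$, the change-of-coordinates principle provides a homeomorphism $g : S_{0,n} \to S_{0,n}$ with $g(P) = P_0$; since every pants decomposition of a punctured sphere has the same combinatorial ``topological type'' of complementary region pattern (all components are pairs of pants, and the adjacency graph is a tree with $n-3$ edges), there is no obstruction to finding such a $g$. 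Then $g^{-1}(P_0) = P$ shows $P \in \Mod(S_{0,n}) \cdot P_0 \subset \Mod(S_{0,n}) \cdot Z_n \subset \Mod(S_{0,n}) \cdot X_n$.

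One technical point I would address carefully: the change-of-coordinates principle produces a homeomorphism, and one should be slightly careful about whether it may permute or fix the punctures, but since we are acting by the full mapping class group $\Mod(S_{0,n})$ (which, in the convention used here, allows permuting punctures, or at worst we only need the pants decomposition to match setwise), this causes no difficulty. I would also note that it is enough to hit every \emph{vertex} of $\MP(S_{0,n})$, since the mapping class group acts by simplicial automorphisms and hence carries edges to edges, so surjectivity on vertices upgrades to surjectivity on the whole graph.

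The main obstacle, and the only place requiring real care rather than invocation of a general principle, is pinning down that the reference decomposition $P_0$ built from chain curves of $\Gamma_n$ is actually a pants decomposition — i.e.\ that the chosen $n-3$ curves from $\Gamma_n$ are pairwise disjoint, pairwise non-isotopic, and cut $S_{0,n}$ entirely into thrice-punctured spheres. This is a finite check that follows from the explicit picture of $\Gamma_n$ in the doubled $n$-gon model (Figure~\ref{F:S_0_8}), together with the fact, already recorded in \cite{Rasimate}, that $Z_n$ is nonempty; indeed one may simply take $P_0$ to be any vertex of $Z_n$, which exists by construction, so even this check can be outsourced to the earlier paper.
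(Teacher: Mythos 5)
Your proposal has two genuine gaps, and together they miss essentially all of the content of the paper's argument. First, the change-of-coordinates step as you state it is false for $n\geq 6$: two pants decompositions of $S_{0,n}$ lie in the same $\Mod(S_{0,n})$-orbit only if their dual trees are isomorphic, and for $n\geq 6$ there are several isomorphism types of such trees (already for $n=6$ the tree whose interior vertices form a path is not isomorphic to the one whose interior vertices form a tripod). So there is no single reference vertex $P_0\in Z_n$ to which every pants decomposition is equivalent, and your fallback of taking $P_0$ to be ``any vertex of $Z_n$'' does not help, since you still work with a single orbit. The paper instead shows that $Z_n$ contains a representative of \emph{every} topological type: given $P$, it takes the dual tree of $P$, redraws that tree inside the regular $n$-gon with the leaves at the punctures, and realizes it by curves of $\Gamma_n$, producing a vertex $P'\in Z_n$ with the same dual tree and hence a homeomorphism $f$ with $f(P')=P$.

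Second, your claimed upgrade from vertices to the whole graph does not work, and this is where the real work in the paper lies. The set $\Mod(S_{0,n})\cdot X_n$ is a union of translates $g(X_n)$ of a fixed subgraph; an edge $\{P_1,P_2\}$ of $\MP(S_{0,n})$ belongs to this union only if some \emph{single} translate contains the edge. Having $P_1$ in one translate and $P_2$ in another is not enough, and neither $X_n$ nor the sets $\MX_i$ built from it in Proposition~\ref{prop:enlarge} are induced subgraphs, so covering all vertices does not imply covering all edges; yet the application (the exhaustion $\bigcup_i\MX_i=\MP(S_{0,n})$ as graphs) requires edge coverage. The paper's proof devotes its second half to exactly this point: after moving $P_1=\{u_1\}\cup Q$ into $Z_n$ by $f$, the curve $f(u_2)$ lies in the four-punctured sphere $(S_{0,n}-f(Q))_0$, which contains exactly one other curve $\alpha$ of $\Gamma_n$; composing $f$ with powers of the Dehn twist about $f(u_1)$ (which fix $f(P_1)$) reduces $i(f(u_2),\alpha)$ to $0$ or $2$, and in the latter case a suitable half twist about $f(u_1)$ carries $f(u_2)$ to $\alpha$, so that the entire edge lands in $Z_n$. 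Without an argument of this kind your proof establishes only vertex coverage, which is strictly weaker than the lemma as it is used.
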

	\begin{proof}
		In the first part of this proof, we will show that, for a given vertex $P$ in $\MP(S_{0,n})$, there exist a vertex $P'$ in $X_n$ and $f\in\Mod(S_{0,n})$ such that $f(P')=P$. 
		To do this, we obtain a pants decomposition $P'$ from a dual graph of the pants decomposition $P$. 
		For the second part, we will show that there is a homeomorphism that send a given edge in $\MP(S_{0,n})$ to an edge in $Z_n\subset X_n$. 
		
		Given a vertex $P$ in $\MP(S_{0,n})$. 
		Recall that we consider $S_{0,n}$ as a double of a regular $n$-gon. 
		Consider $P$ as a pants decomposition on $S_{0,n}$. 
		The following construction of a dual graph of $P$ was given in~\cite{HT}. For each pair of pants component of $(S_{0,n}-P)$, we mark a vertex on the interior of the component. 
		We also mark the $n$ punctures as $n$ vertices. Two vertices are connected by an edge if (1) they are vertices on the interior of two pants components which have a common boundary, or (2) one of the vertices is on the interior of a pair of pants component and another vertex is a puncture of the same component. 
		The result is a tree with $2n-2$ vertices; all puncture-vertices have degree $1$, while the rest of the vertices have degree $3$, see Figure~\ref{F:dual}.
		
		\begin{figure}[ht]
			\begin{center}
				\includegraphics[height=4.5cm]{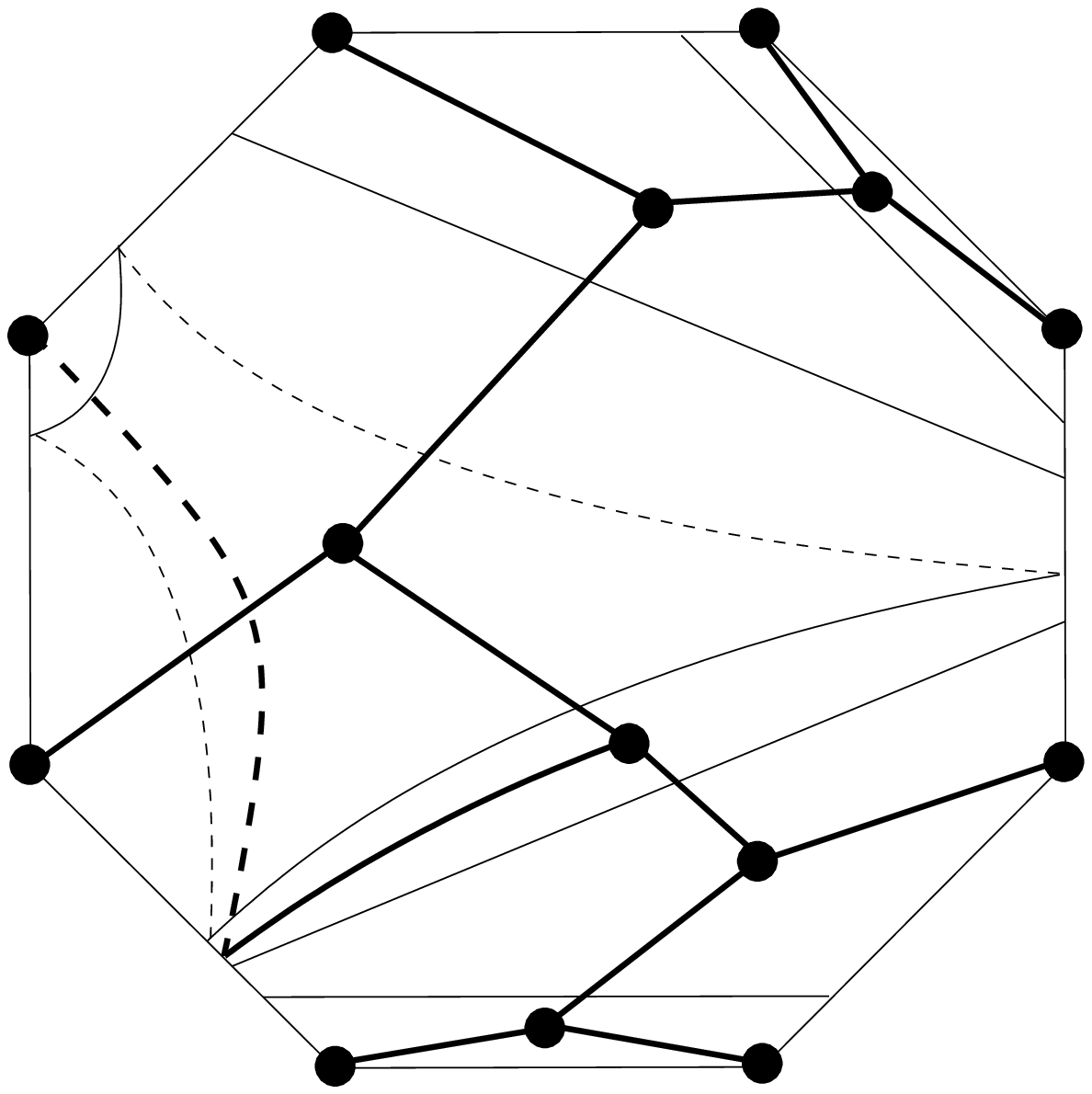} 
				\caption{Example of a pants decomposition of $S_{0,8}$ and its dual graph shown in thick edges.} 
				\label{F:dual}
			\end{center}
		\end{figure}
		
		Since a tree is planar, we can redraw this tree on the plane inside a regular $n$-gon so that all $n$ puncture-vertices are the $n$ vertices of the $n$-gon. 
		We reconstruct a pants decomposition consisting of curves in $\Gamma_n$ by drawing a curve connecting two sides of the regular $n$-gon whenever this curve can cross exactly one edge of the tree and both endpoints of this edge are not puncture-vertices. Double the regular $n$-gon. 
		We now have a pants decomposition $P'$ consisting of curves in $\Gamma_n$, i.e., $P'$ is a vertex in $Z_n\subset X_n$. 
		
		The above construction of $P'$ from $P$ gives a one-to-one correspondence between the pants components $S_{0,n}-P$ and the pants components $S_{0,n}-P'$. 
		This correspondence describes a homeomorphism $f$ such that $f(P')=P$, as desired.
		
		Next we show that if $P_1$ and $P_2$ are adjacent vertices in $\MP(S_{0,n})$, then after applying some homeomorphisms on $S_{0,n}$ to $P_1$ and $P_2$, we get two vertices that are adjacent in $Z_n$.
		
		Given adjacent vertices $P_1$ and $P_2$ in $\MP(S_{0,n})$, then there exist curves $u_1, u_2$ on $S_{0,n}$ and a deficiency-$1$ multicurve $Q$ such that $P_1=\{u_1\}\cup Q$ and $P_2=\{u_2\}\cup Q$. 
		By the first part of the proof, there is $f\in\Mod(S_{0,n})$ such that $f(P_1)$ is a vertex in $Z_n$. 
		If $f(P_2)$ is also in $Z_n$, then we are done.
		
		Suppose $f(P_2)$ is not in $Z_n$. 
		Use Figure~\ref{F:full} as a reference for the rest of the proof. 
		We note that $f(Q)\subset\Gamma_n$ and it has deficiency-$1$. The nontrivial component $(S_{0,n}-f(Q))_0\cong S_{0,4}$ contains exactly two curves in $\Gamma_n$; one curve is $f(u_1)$ and we call the other curve $\alpha$. 
		Then $i(f(u_2),\alpha)=2n$ for some $n\in\BN$.
		Applying one full twist around $f(u_1)$ in an appropriate direction reduces the intersection number by $4$. 
		$f(P_1)$ is invariant under this full twist. 
		So we can choose a new $f$ (by composing the old one with some power of full twists) and assume that $i(f(u_2),\alpha)=0$ or $i(f(u_2),\alpha)=2$. 
		If $i(f(u_2),\alpha)=0$, then $f(u_2)=\alpha$ and we are done.
		
		Suppose $i(f(u_2),\alpha)=2$. 
		We compose $f$ by an appropriate \textit{half twist} $T$ around $f(u_1)$: here a half twist in $f(u_1)$ is a homeomorphism on $S_{0,n}$, whose square is the Dehn twist in $f(u_1)$,  although we note that it does not necessary restrict to a homeomorphism of $(S_{0,n}-f(Q))_0\cong S_{0,4}$. We choose the half twist that essentially ``flips over'' half of the $n$-gon, cut along $f(u_1)$; see  Figure~\ref{F:halftwist} and also  Figure~\ref{F:full}. 
		Then $T\circ f(u_2)=\alpha$ and the edge $\{T\circ f(P_1),T\circ f(P_2)\}$ is in $Z_n$ as desired. 
	\end{proof}
	
	\begin{figure}[ht]
		\begin{center}
			\includegraphics[height=7cm]{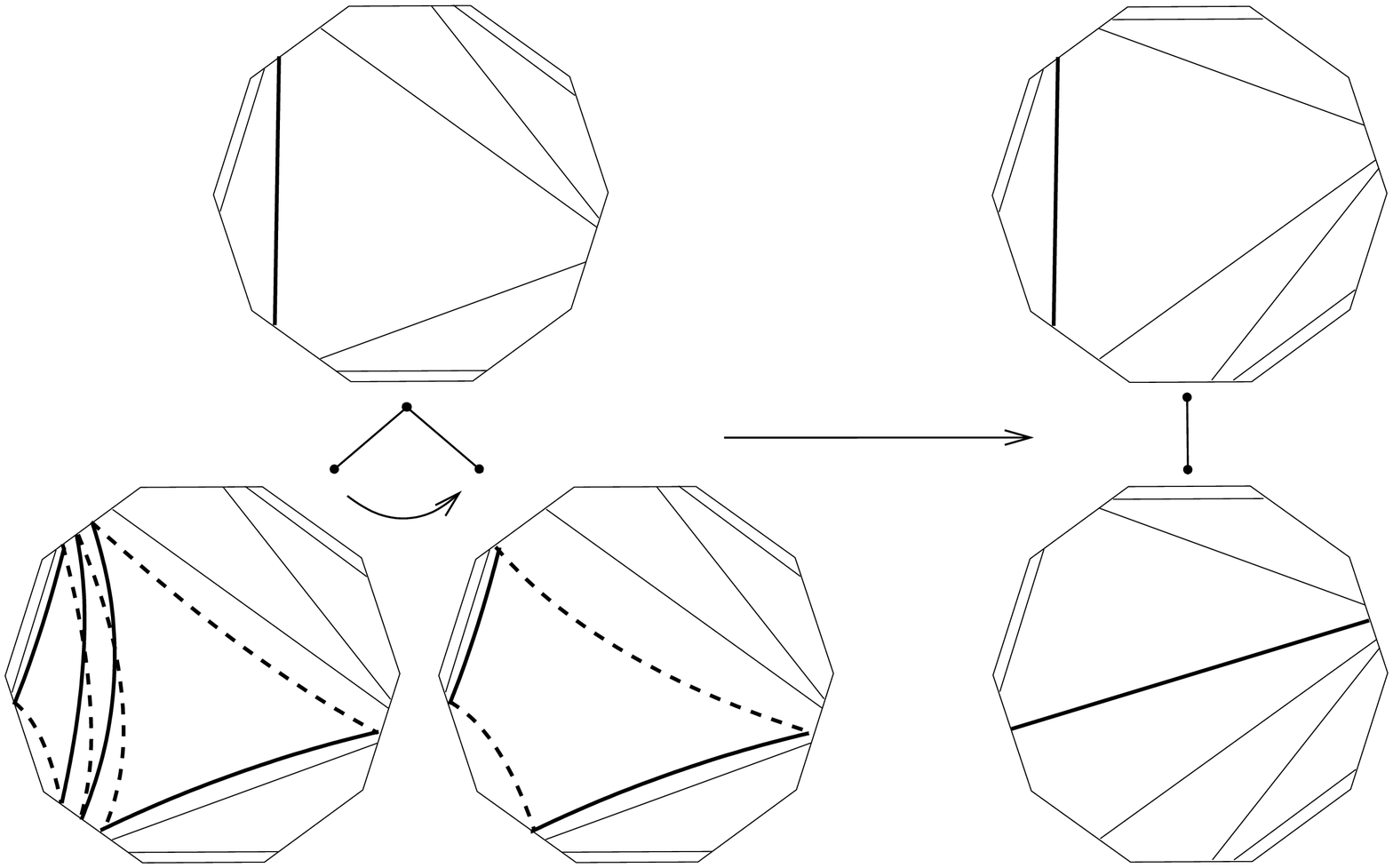} 
			\caption{Example of an edge $\{f(P_1),f(P_2)\}$ and its images after composing with a power of full twist around the curve $f(u_1)$ and a \textit{half twist} around the same curve.} 
			\label{F:full}
		\end{center}
		\begin{picture}(0,0)(0,0)
		\put(80,275){\small $f(u_1)$}
		\put(33,165){\small $f(u_2)$}
		\end{picture}
	\end{figure}
	
	\begin{figure}[ht]
		\begin{center}
			\includegraphics[height=7cm]{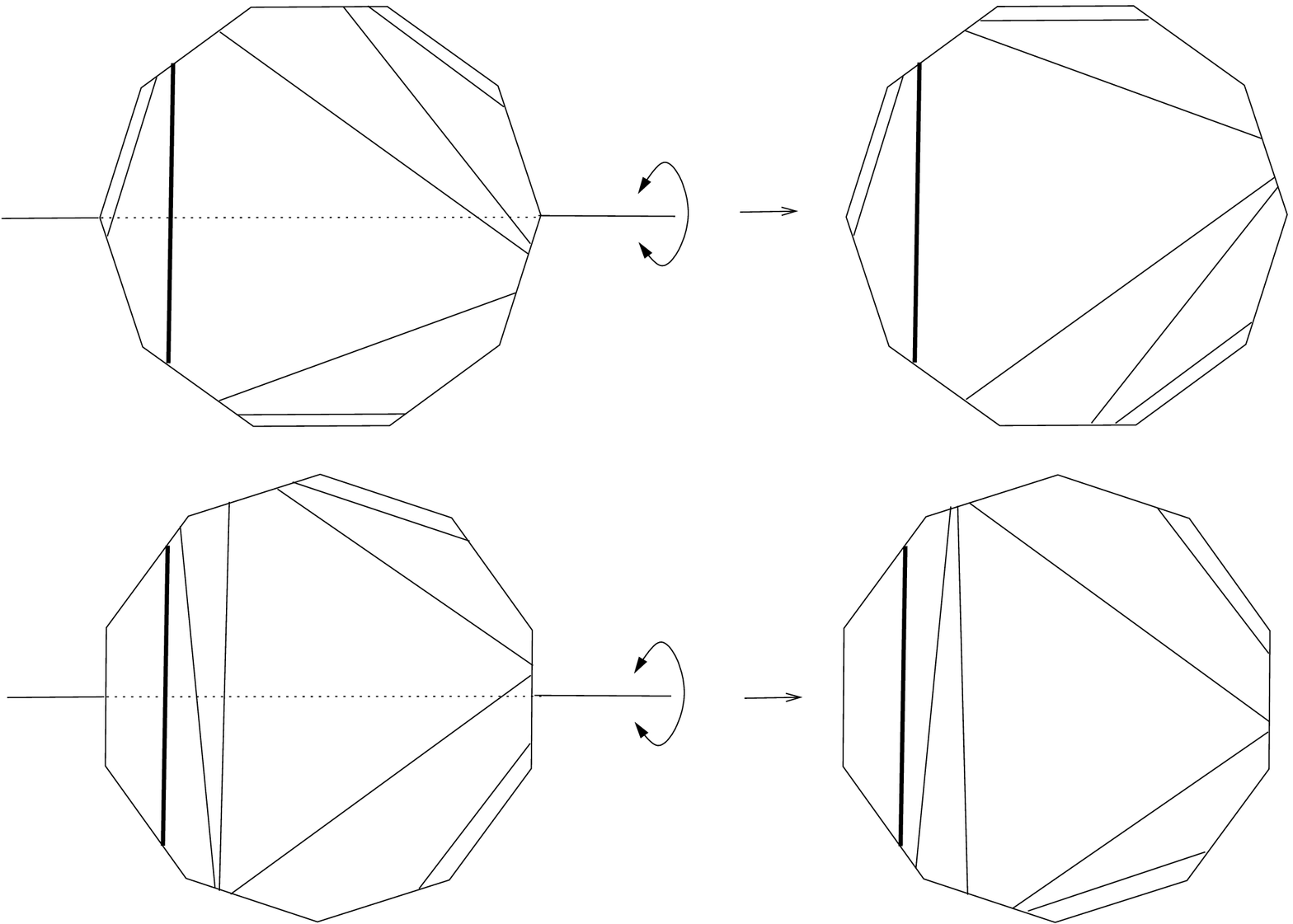} 
			\caption{Examples of half twist around the thick curves. Two pants decompositions in $Z_{10}$ and $Z_{11}$ are given to help visualize the homeomorphisms. Note that after a half twisting, we get a new pants decomposition that is still in $Z_{10}$ or $Z_{11}$.} 
			\label{F:halftwist}
		\end{center}
	\end{figure}
	
	Let $\alpha$ be a curve on $S_{0,n}$. We defined $\MP_\alpha(S_{0,n})$ to be a subgraph of $\MP(S_{0,n})$ induced by vertices corresponding to pants decompositions containing $\alpha$.
	
	The following lemma is proved in~\cite{Rasimate} and we use this lemma to prove Lemma~\ref{lem:weakly2}.
	\begin{lem}
		\label{lem:X_n}
		For $n\geq6$, let $\alpha$ be a chain curve on $S_{0,n}$ and let $X^\alpha_{n-1}=X_n \cap \mathcal{P}_{\alpha}(S_{0,n})$. 
		
		Then $X^\alpha_{n-1}\cong X_{n-1}$. 
		Moreover, this isomorphism is induced by $h: S_{0,n-1}\to(S_{0,n}-\alpha)_0$ as $h^{\alpha}(v) = h(v) \cup \{\alpha\} \in X_{n-1}^\alpha$.
	\end{lem}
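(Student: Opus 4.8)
The plan is to realize $X^{\alpha}_{n-1}=X_n\cap\MP_\alpha(S_{0,n})$ through the standard ``forget a curve'' isomorphism. Since $\alpha$ is a chain curve it cuts off a pair of pants, so $(S_{0,n}-\alpha)_0\cong S_{0,n-1}$, and deleting $\alpha$ from a pants decomposition gives a simplicial isomorphism $\MP_\alpha(S_{0,n})\to\MP\big((S_{0,n}-\alpha)_0\big)$ whose inverse is $h^{\alpha}$ for a fixed homeomorphism $h\colon S_{0,n-1}\to(S_{0,n}-\alpha)_0$ chosen with $h(\Gamma_{n-1})=\Gamma^{\alpha}_{n-1}:=\{\gamma\in\Gamma_n\mid i(\gamma,\alpha)=0\}$; such an $h$ exists from the polygon description of $\Gamma_n$, exactly as in \cite[Lemma 3.1]{Rasimate}. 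Edges of $\MP_\alpha(S_{0,n})$ are elementary moves supported away from $\alpha$, and $X_n$, $X_{n-1}$ are built from $Z_n$, $Z_{n-1}$ and the blocks $X_5^W$, $X_5^{W'}$; so it suffices to check that $h^{\alpha}$ carries the vertices and edges of $Z_{n-1}$ and of each $X_5^{W'}$ onto those of $Z_n\cap\MP_\alpha(S_{0,n})$ and of the blocks $X_5^W$ meeting $\MP_\alpha(S_{0,n})$, and to match the union structure.

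The $Z$-part is immediate: a $\Gamma_n$-pants decomposition containing $\alpha$ restricts, after deleting $\alpha$, to a $\Gamma^{\alpha}_{n-1}$-pants decomposition of $(S_{0,n}-\alpha)_0$, hence to an element of $h(Z_{n-1})$; conversely one re-adds $\alpha$, and elementary moves correspond. Thus $Z_n\cap\MP_\alpha(S_{0,n})=h^{\alpha}(Z_{n-1})$.

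For the $S_{0,5}$-blocks, fix a deficiency-$2$ multicurve $W\subset\Gamma_n$ with $5$-punctured-sphere component $\Sigma_W=(S_{0,n}-W)_0$ and the homeomorphism $h_W\colon S_{0,5}\to\Sigma_W$ with $h_W(\Gamma_5)=\Gamma_5^W$ used to define $X_5^W$. A vertex $h_W(u)\cup W$ of $X_5^W$ contains $\alpha$ precisely when $(i)$ $\alpha\in W$, or $(ii)$ $\alpha\notin W$ and $\alpha=h_W(g)$ for some curve $g$ of $\Sigma_W$ occurring in $X_5$. In case $(i)$ put $W':=h^{-1}(W\setminus\{\alpha\})$: this is a deficiency-$2$ multicurve of $\Gamma_{n-1}$ with $(S_{0,n-1}-W')_0\cong\Sigma_W$, one may take $h_{W'}=h^{-1}\circ h_W$ (the residual ambiguity over $S_{0,5}$ acts trivially on $X_5$), and then $X_5^W\subset\MP_\alpha(S_{0,n})$ with $(h^{\alpha})^{-1}(X_5^W)=X_5^{W'}$; moreover $W\mapsto W'$ is a bijection onto all deficiency-$2$ multicurves of $\Gamma_{n-1}$ with a $5$-sphere component, with inverse $W'\mapsto h(W')\cup\{\alpha\}$. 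In case $(ii)$, since $\alpha$ is disjoint from $W$ and lies in $\Gamma_n$ it lies in $\Gamma^W_5=h_W(\Gamma_5)$, so $g=h_W^{-1}(\alpha)\in\Gamma_5$ is a chain curve of $\Sigma_W$ and $X_5^W\cap\MP_\alpha(S_{0,n})=\{\,h_W(u)\cup W\mid u\in X_5\cap\MP_g(S_{0,5})\,\}$. One computes $X_5\cap\MP_g(S_{0,5})$ explicitly from $X_5=Z_5\cup\bigcup_{c\in\Gamma_5}T^{\pm\frac{1}{2}}_c(Z_5)$: it is the edge of $Z_5$ through $g$ together with the finitely many half-twisted vertices incident to that edge. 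Pushing these vertices to $S_{0,n-1}$, the two coming from $Z_5$ land in $Z_{n-1}$, while each half-twisted one is shown to lie in $X_5^{W''}$ for a suitable deficiency-$2$ multicurve $W''\subset\Gamma_{n-1}$ obtained by deleting one curve from $h^{-1}(W)$, the point being that the half-twisted curve in question, re-marked by $h_{W''}$ for the enlarged $5$-sphere component, reappears among the curves occurring in $X_5$. Running these identifications in reverse shows every vertex of $Z_{n-1}$ and of every $X_5^{W'}$ is hit; the vertex sets then agree, the edge sets agree by the same analysis, and $h^{\alpha}\colon X_{n-1}\to X^{\alpha}_{n-1}$ is a simplicial isomorphism.

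The main obstacle is case $(ii)$: one must determine $X_5\cap\MP_g(S_{0,5})$ on the nose and then verify that each half-twisted pants decomposition occurring there, once transported into $S_{0,n-1}$ and re-marked for the enlarged $5$-sphere component, is a genuine vertex of some block $X_5^{W''}$ and not a stray pants decomposition outside $X_{n-1}$ --- this is where the specific combinatorics of the half-twists on the alternating pentagon $Z_5$ enters. I would also dispatch the base case $n=6$ separately, since there $X_{n-1}=X_5$ is given by the exceptional formula with explicit half-twist blocks rather than by nested $5$-sphere components.
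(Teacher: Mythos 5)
A preliminary remark: the paper itself gives no proof of this lemma --- it is imported verbatim from~\cite{Rasimate} (``The following lemma is proved in...''), so there is no in-paper argument to compare yours against, and your proposal has to stand on its own. Its skeleton is the natural one and is sound: the forget-$\alpha$ isomorphism $\MP_\alpha(S_{0,n})\cong\MP\big((S_{0,n}-\alpha)_0\big)$ with $h(\Gamma_{n-1})=\Gamma^\alpha_{n-1}$, the identification $Z_n\cap\MP_\alpha(S_{0,n})=h^\alpha(Z_{n-1})$, the case (i) bijection $W\leftrightarrow W'$ between blocks with $\alpha\in W$ and all blocks of $X_{n-1}$ (which also gives the easy containment $h^\alpha(X_{n-1})\subset X_n\cap\MP_\alpha(S_{0,n})$), and your count of $X_5\cap\MP_g(S_{0,5})$ as the $Z_5$-edge through $g$ plus the four vertices $T^{\pm\frac{1}{2}}_c(P)$ for the two curves $c\in\Gamma_5$ disjoint from $g$ are all correct. (One small correction: the residual ambiguity in the marking homeomorphism does not act trivially on $X_5$; it permutes the blocks $T^{\pm\frac{1}{2}}_c(Z_5)$ and hence preserves $X_5$ setwise, which is what makes $X_5^W$ well defined.)

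The genuine gap is exactly the step you flag and then leave as an assertion: that in case (ii) each half-twisted vertex of $X_5^W\cap\MP_\alpha(S_{0,n})$ ``is shown to lie in $X_5^{W''}$.'' Two things must actually be proved there. First, you must exhibit a curve of $h^{-1}(W)$ whose deletion produces a $W''$ whose complement has a single five-holed nontrivial piece rather than two four-holed ones; this forces the deleted curve to abut the four-holed piece $(S_{0,n-1}-h^{-1}(W))_0$, and such a curve exists only because $W\neq\emptyset$, i.e.\ because $n\geq 6$ --- the hypothesis enters precisely here and your sketch never uses it. Second, and more seriously, the half-twisted curve in the vertex is by definition the $h_W$-image of a half twist performed inside $(S_{0,n}-W)_0$, while membership in $X_5^{W''}$ requires it to be the $h_{W''}$-image of a half twist performed inside the different subsurface $(S_{0,n-1}-W'')_0$; both are images of the same curve under half twists about the same curve, but supported in different twice-holed disks bounded by it. You need the (true, but not free) fact that the effect of a half twist on isotopy classes of curves depends only on the twisting curve and the direction, not on which side supports the twist, after which the sign ambiguity is absorbed because $X_5$ contains both $T^{+\frac{1}{2}}_c(Z_5)$ and $T^{-\frac{1}{2}}_c(Z_5)$. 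The same bookkeeping is needed for edges (the edge from a half-twisted vertex to its $Z_n$-neighbour must be located inside the twisted pentagon of the chosen block $X_5^{W''}$). Until these points are written out, the containment $X_5^W\cap\MP_\alpha(S_{0,n})\subset h^\alpha(X_{n-1})$ for $\alpha\notin W$, and with it the harder half of the lemma, is not established; with them, your argument does close up.
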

	
	\begin{lem}
		\label{lem:weakly2}
		$X_n\cap T^i_\alpha(X_n)$ is weakly rigid, for $i\in\{-\frac{1}{2},\frac{1}{2}\}$ and for all chain curves $\alpha$ in $S_{0,n}$.
	\end{lem}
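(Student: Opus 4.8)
The plan is to reduce the weak rigidity of $X_n \cap T^i_\alpha(X_n)$ to the weak rigidity (indeed, just rigidity) of smaller pieces we already understand, by exploiting the structure of $X_n$ as built from copies of $X_5$ glued along chain curves, together with Lemma~\ref{lem:X_n}. Recall we must show: if $f_1, f_2 : S_{0,n} \to S_{0,m}$ are $\pi_1$-injective embeddings with $f_1^{Q_1}|_{X_n \cap T^i_\alpha(X_n)} = f_2^{Q_2}|_{X_n \cap T^i_\alpha(X_n)}$ for deficiency-$(n-3)$ multicurves $Q_1, Q_2$, then $Q_1 = Q_2$ and $f_1 = f_2$ up to isotopy. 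Since a superset of a weakly rigid set is weakly rigid, it suffices to exhibit \emph{some} weakly rigid subset of $X_n \cap T^i_\alpha(X_n)$; equivalently, to show that the data $f_j^{Q_j}$ restricted to $X_n \cap T^i_\alpha(X_n)$ already pins down the pair $(f_j, Q_j)$.

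First I would analyze the base case $n = 5$ directly, since this is where the combinatorial content lives. The set $X_5 = Z_5 \cup \bigcup_{c\in\Gamma_5} T^{\pm 1/2}_c(Z_5)$ consists of the alternating pentagon together with ten attached triangles, and for a chain curve $\alpha$, $T^{i}_\alpha(X_5)$ is its image under a half twist. The intersection $X_5 \cap T^{i}_\alpha(X_5)$ is depicted in the bottom-right of Figure~\ref{F:S05andX05}; the point I would establish is that this intersection, although not all of $X_5$, still contains enough structure — at minimum several triangles sharing edges, hence several copies of a Farey graph — that any simplicial map of it into $\MP(S_{0,m})$ determines a subsurface and a pants decomposition of its complement. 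Concretely, I would argue that $X_5 \cap T^{i}_\alpha(X_5)$ contains a finite rigid set, or at least a configuration of triangles whose image forces $(f_j, Q_j)$; here one uses that an edge of $\MP(S_{0,m})$ lies in exactly two triangles, both in the same Farey subgraph, so the image of a connected union of triangles is rigidly constrained. From this, $f_1 = f_2$ and $Q_1 = Q_2$ on that subset follow by the uniqueness clause in Definition~\ref{def:rigid}.

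For $n \geq 6$ I would induct using Lemma~\ref{lem:X_n}. Since $\alpha$ is a chain curve, $X^\alpha_{n-1} = X_n \cap \MP_\alpha(S_{0,n}) \cong X_{n-1}$ via $h : S_{0,n-1} \to (S_{0,n}-\alpha)_0$. The key observation is that $T^{i}_\alpha$ fixes $\alpha$, so $T^{i}_\alpha(\MP_\alpha(S_{0,n})) = \MP_\alpha(S_{0,n})$, and therefore $X^\alpha_{n-1} \cap T^{i}_\alpha(X^\alpha_{n-1}) = (X_n \cap T^{i}_\alpha(X_n)) \cap \MP_\alpha(S_{0,n})$ is a subset of $X_n \cap T^{i}_\alpha(X_n)$. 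Under the identification $h$, the half twist $T^{i}_\alpha$ on $S_{0,n}$ restricts to a half twist around the corresponding chain curve $h^{-1}(\bar\alpha)$ on $S_{0,n-1}$ (one must check $\alpha$ being a chain curve of $S_{0,n}$ makes the relevant curve on $S_{0,n-1}$ a chain curve too — this is a small diagram-chase with the $n$-gon picture of Figures~\ref{F:S_0_8} and~\ref{F:halftwist}), so by the inductive hypothesis $X^\alpha_{n-1} \cap T^{i}_\alpha(X^\alpha_{n-1})$ is weakly rigid. Being a subset of $X_n \cap T^{i}_\alpha(X_n)$, this makes $X_n \cap T^{i}_\alpha(X_n)$ weakly rigid as well. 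One caveat: weak rigidity of $X^\alpha_{n-1}\cong X_{n-1}$ as a subset of $\MP(S_{0,n-1})$ must be transported to weak rigidity of $X^\alpha_{n-1}$ as a subset of $\MP(S_{0,n})$; this follows because $h$ is $\pi_1$-injective and carries the deficiency data appropriately, so that a witness $(f_1,f_2,Q_1,Q_2)$ on $S_{0,n}$ restricts to a witness on the subsurface $(S_{0,n}-\alpha)_0$, forcing agreement there and then, since $\alpha \in Q_1 \cap Q_2$ automatically (every pants decomposition in $X^\alpha_{n-1}$ contains $\alpha$, so its image contains $f_j(\alpha)$, but one needs $f_j(\alpha)$ together with $Q_j$ — a short argument using that $\alpha$ is recognizable as the curve common to all vertices), forcing agreement on all of $S_{0,n}$.

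I expect the main obstacle to be the base case $n=5$: verifying that the relatively small set $X_5 \cap T^{i}_\alpha(X_5)$ — which is strictly smaller than the rigid set $X_5$ — is nonetheless weakly rigid. Unlike the inductive step, which is essentially formal once the half-twist-restricts-to-half-twist point is checked, the base case requires a hands-on examination of Figure~\ref{F:S05andX05}'s bottom-right picture to confirm that enough triangles survive the intersection to force the embedding and the complementary multicurve. A secondary subtlety is the careful bookkeeping that a chain curve on $S_{0,n}$ corresponds, under the natural homeomorphism $h$, to a chain curve on $S_{0,n-1}$, and that the half twist descends correctly; this is routine but must be stated explicitly since Lemma~\ref{lem:X_n} only asserts the isomorphism, not its compatibility with half twists.
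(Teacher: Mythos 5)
Your step for $n\ge 6$ contains the real gap. First, a technical point: the half twist $T^i_\alpha$ about a chain curve is supported in the twice-punctured disk cut off by $\alpha$, which is a pair of pants and hence not part of $(S_{0,n}-\alpha)_0$; it therefore fixes every vertex of $\MP_\alpha(S_{0,n})$, so your set $X^\alpha_{n-1}\cap T^i_\alpha(X^\alpha_{n-1})$ is simply $X^\alpha_{n-1}$, and $T^i_\alpha$ does not ``restrict to a half twist about a chain curve of $S_{0,n-1}$'' (its restriction to $(S_{0,n}-\alpha)_0$ is trivial). So your inductive hypothesis is never actually invoked; what is available (and what the paper uses) is just rigidity of $X_{n-1}$ together with the uniqueness clause of Definition~\ref{def:rigid}. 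Second, and more seriously, the conclusion you draw from the subset $X^\alpha_{n-1}$ is false: $X^\alpha_{n-1}$ by itself is \emph{not} weakly rigid in $\MP(S_{0,n})$. Since every vertex of $X^\alpha_{n-1}$ contains $\alpha$, agreement of $f_1^{Q_1}$ and $f_2^{Q_2}$ on it only determines the restrictions of $f_1,f_2$ to $(S_{0,n}-\alpha)_0$ and the multicurve $Q_1\cup\{f_1(\alpha)\}=Q_2\cup\{f_2(\alpha)\}$; already for $m=n+1$ one can take $Q'=\{c_1,c_2\}$ with both $c_j$ cutting off twice-punctured disks adjacent to the common image subsurface, and extend the common restriction over the disk bounded by $\alpha$ onto the $c_1$-disk for $f_1$ and the $c_2$-disk for $f_2$: then $f_1^{Q_1}$ and $f_2^{Q_2}$ agree on all of $X^\alpha_{n-1}$ while $Q_1=\{c_2\}\ne\{c_1\}=Q_2$ and $f_1\ne f_2$. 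Your parenthetical repair (``$\alpha$ is recognizable as the curve common to all vertices'') fails for exactly this reason: every curve of $Q_j$ is also common to all image vertices, so commonality cannot separate $f_j(\alpha)$ from $Q_j$. The paper closes this gap by using that $X_n\cap T^i_\alpha(X_n)$ is strictly larger than $X^\alpha_{n-1}$: it contains a vertex $T^i_\alpha(P')$ adjacent to $X^\alpha_{n-1}$ that does not contain $\alpha$, and agreement of $f_1^{Q_1}$ and $f_2^{Q_2}$ there forces $f_1(\alpha)=f_2(\alpha)$, hence $Q_1=Q_2$ and $f_1=f_2$. Some such extra vertex is indispensable, and your proposal never identifies it.

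The base case $n=5$, which you defer as ``hands-on examination,'' is also not an optional verification but a concrete argument you have not supplied, and your suggested route (show the intersection contains a finite \emph{rigid} set, constraining arbitrary simplicial maps) claims more than is needed and more than is established. The paper's argument is different and shorter: $X_5\cap T^i_\alpha(X_5)$ contains the alternating pentagon $Z_5$ together with four attached triangles; applying Luo's lemma to $Z_5$ gives $Q_1=Q_2$ and $f_1=f_2$ or $f_1=f_2\circ e$, where $e$ is the involution exchanging the two sides of the doubled pentagon, and the four triangles, which $e$ swaps in pairs, rule out the second alternative. In short, both halves of your outline need the missing ingredients: Luo's lemma plus the involution-killing triangles for $n=5$, and the extra vertex $T^i_\alpha(P')$ (rather than an induction that never engages) for $n\ge 6$.
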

	\begin{proof}
		Let $\alpha$ be a chain curve and $i\in\{-\frac{1}{2},\frac{1}{2}\}$. Suppose $f_1, f_2 :S_{0,n}\to S_{0,m}$ are $\pi_1$-injective embeddings such that
		\[
		f_1^{Q_1}|_{X_n\cap T^i_\alpha(X_n)}=f_2^{Q_2}|_{X_n\cap T^i_\alpha(X_n)},
		\]
		for some deficiency-$(n-3)$ multicurves $Q_1$ and $Q_2$ on $S_{0,m}$.
		
		We first prove the case of $n=5$. Recall the definition of $X_5$. 
		By a direct calculation, we see that $X_5\cap T^i_\alpha(X_5)$ consists of two alternating pentagons which are $Z_5=T^i_\alpha(T^{-i}_\alpha(Z_5))$ and $T^i_\alpha(Z_5)$. 
		They share an edge together with four triangles as shown in Figure~\ref{F:S05andX05}. 
		Since $Z_5$ is an alternating pentagon and $f^{Q_1}_1|_{Z_5}=f^{Q_2}_2|_{Z_5}$,  \cite[Lemma 4.2]{Luo} implies that $Q_1=Q_2$ and 
		\[f_1=f_2 \text{ or }  f_1=f_2\circ e,\]
		where $e:S_{0,5}\to S_{0,5}$ is the involution exchanging the two pentagons (as we consider $S_5$ as a double of a pentagon). 
		The map $e$ induces a simplicial map on $\MP(S_{0,5})$ that fixes $Z_5$ and exchanges two triangles on each side of $Z_5$. 
		But $f_1$ and $f_2$ also agree on the four triangles attached to $Z_5$ so $f_1=f_2$. 
		Hence the case of $n=5$ is proved.
		
		Let $n\geq 6$ and let $\alpha$ be any chain curve. 
		By Lemma~\ref{lem:X_n}, a subgraph $X^\alpha_{n-1}=X_n\cap\MP_{\alpha}(S_{0,n})\cong X_{n-1}$. 
		Since each vertex of $X^\alpha_{n-1}$ contains $\alpha$, $T^i_{\alpha}(X^\alpha_{n-1})=X^\alpha_{n-1}$. Hence $X_n\cap T^i_\alpha(X_n)$ contains $X^\alpha_{n-1}\cong X_{n-1}$. 
		Consider the restrictions of $f_1$ and $f_2$ on the subsurface $(S_{0,n}-\{\alpha\})_0$. 
		Since $X_{n-1}$ is rigid, so is $X^{\alpha}_{n-1}$, and the uniqueness part of Definition~\ref{def:rigid} implies that $f_1$ agrees with $f_2$ on $(S_{0,n}-\{\alpha\})_0$ and $Q_1\cup\{f_1(\alpha)\}=Q_2\cup\{f_1(\alpha)\}$.
		
		We can see that $X^\alpha_{n-1}$ is a proper subgraph of $X_n\cap T^i_\alpha(X_n)$. 
		For example, choose a vertex $P$ in $Z_n\cap\MP_{\alpha}(S_{0,n})\subset X^\alpha_{n-1}$. 
		Then change $P$ to $P'$ by the elementary move which replaces $\alpha$ by the other curve $\alpha'$ in $\Gamma_n$. 
		The vertex $T^i_{\alpha}(P')$ is adjacent to $P$ and it is a vertex in $X_n\cap T^i_\alpha(X_n)$. 
		Hence $f_1$ and $f_2$ agree on $T^i_{\alpha}(P')$. 
		Since $Q_1$ and $Q_2$ are the intersections of all vertices in $f_1(X_n\cap T^i_\alpha(X_n))$ and $f_2(X_n\cap T^i_\alpha(X_n))$, respectively, and $\alpha\notin T^i_{\alpha}(P')$, it follows $f_1(\alpha)=f_2(\alpha)$ is not in the intersection. Therefore $Q_1=Q_2$ and $f_1=f_2$. 
	\end{proof}

\begin{proof}[Proof of Theorem~\ref{T:main theorem} for $S_{0,n}, n\geq5$]
	We are ready to prove the main theorem for $n\geq 5$. 
	We check that all conditions in Proposition~\ref{prop:enlarge} are satisfied.
	Let $\MX=X_n$. 
	Lemma~\ref{lem:Mod} states that $\Mod(S_{0,n})\cdot\MX=\MP(S_{0,n})$. 
	The set 
	\[
	C=\{T^{\pm\frac{1}{2}}(\alpha)~|~\alpha \text{~a chain curve}\}
	\]
	generates $\Mod(S_{0,n})$, see \cite[Corollary 4.15]{danprime}. 
	And by Lemma~\ref{lem:weakly2}, $X_n\cap T^i_\alpha(X_n)$ is weakly rigid, for $i\in\{-\frac{1}{2},\frac{1}{2}\}$ and for all chain curves $\alpha$ in $S_{0,n}$. 
	Therefore Proposition~\ref{prop:enlarge} gives us a sequence of finite rigid set $\MX=\MX_1\subset\MX_2\subset...\subset\MX_m\subset...$ such that $\bigcup_{i\in\BN}\MX_i=\MP(S_{0,n})$, as desired. 
	
\end{proof}	


\bibliographystyle{plain}
\bibliography{paper3}


\end{document}